\theoremstyle{plain}
\newtheorem{corollary}{Corollary}
\newtheorem{theorem}{Theorem}
\numberwithin{equation}{section}
\begin{document}
\title{On Chevalley's Extension Theorem}
\author{Muhammad Zafrullah}
\address{Department of Mathematics, Idaho State University, Pocatello, 83209
ID}
\email{mzafrullah@usa.net}
\urladdr{}
\subjclass[2000]{Primary 13A15, 13A18; Secondary 13G05}
\keywords{Ring, subring, field, unit, Chevalley, Kaplansky}
\dedicatory{Dedicated ti the memory of Paul Cohn}

\begin{abstract}
Professor Daniel Anderson informed me, recently, that there is an error in
the proof of Theorem 56 of Kaplansky's book on Commutative Rings. His
(Dan's) reason was "He (Kaplansky) orders by reverse inclusion but in the
last line uses inclusion, so we don't contradict maximality (which is
minimality)". The aim of this short note is to indicate that while Dan
Anderson appears to be correct in pointing out an error in the proof of
Theorem 56 of \cite{K}, the statement of the theorem is a correct
consequence of a Theorem of Chevalley's.
\end{abstract}

\maketitle

\bigskip Professor Daniel Anderson informed me, recently, via \cite{A}, that
there is an error in the proof of Theorem 56 of Kaplansky's book on
Commutative Rings. His (Dan's) reason was "He (Kaplansky) orders by reverse
inclusion but in the last line uses inclusion, so we don't contradict
maximality (which is minimality)". Looking at the theorem and its proof, I
realized that I had seen a similar result elsewhere. After some search I
found Chevalley's Extension Theorem as Theorem 3.1.1 of \cite{EP}. The aim
of this short note is to indicate that while Dan Anderson appears to be
correct in pointing out an error in the proof of Theorem 56 of \cite{K}, the
Theorem is correct as stated as it follows from a theorem of Chevalley's. We
include, below, Chevalley's theorem and its proof to indicate how it is
related to another theorem in \cite{K}.

It is hard to believe that there would be such an error in \cite{K}, but Dan
Anderson is a very serious and respected Mathematician, and a student of
Kaplansky's. Consider this another reason why this note got written.

I give the statement and a redo of the proof, below, of Chevalley's
Extension Theorem.

\begin{theorem}
\label{Theorem A}Given that $K$ is a field, let $R\subseteq K$ be a subring
of $K$ and let $P\subseteq R$ be a prime ideal of $R.$ Then there exists a
valuation ring $O$ of $K$ such that $R\subseteq O$ and $M\cap R=P,$ where $M$
is the maximal ideal of $O.$
\end{theorem}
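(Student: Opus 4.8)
The plan is to reduce to the case where $R$ is local, then extract $O$ as a maximal element for the domination order by Zorn's Lemma, and finally show that maximality forces $O$ to be a valuation ring. First I would localize: replace $R$ by $R_P$, which is local with maximal ideal $PR_P$ and satisfies $PR_P\cap R=P$. Since $R\subseteq R_P\subseteq K$, any valuation ring $O$ with $R_P\subseteq O$ and $M\cap R_P=PR_P$ automatically satisfies $M\cap R=P$. Thus it suffices to dominate the local ring $(R_P,PR_P)$, and I may assume from the outset that $R$ is local with maximal ideal $P$.

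Next, consider the set $\Sigma$ of local subrings $(A,\mathfrak{m}_A)$ of $K$ dominating $(R,P)$, that is, with $R\subseteq A$ and $P\subseteq\mathfrak{m}_A$, partially ordered by domination: $(A,\mathfrak{m}_A)\le(B,\mathfrak{m}_B)$ when $A\subseteq B$ and $\mathfrak{m}_B\cap A=\mathfrak{m}_A$. A routine verification shows that the union of a chain, equipped with the union of the maximal ideals, is again a local ring in $\Sigma$, the point being that an element outside the union of the $\mathfrak{m}_{A_i}$ is already a unit at some stage. Hence Zorn's Lemma yields a maximal element $(O,M)$.

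It remains to prove that $O$ is a valuation ring of $K$, i.e.\ that for every $x\in K^{\times}$ either $x\in O$ or $x^{-1}\in O$. The crucial step, and the part I expect to be the main obstacle, is the lemma that at least one of the ideals $M\cdot O[x]$ and $M\cdot O[x^{-1}]$ is proper. Granting this, suppose $M\cdot O[x]\neq O[x]$ and choose a maximal ideal $\mathfrak{n}$ of $O[x]$ containing $M\cdot O[x]$. Then $\mathfrak{n}\cap O=M$, since it is a proper ideal containing the maximal ideal $M$, so the localization $(O[x]_{\mathfrak{n}},\mathfrak{n}O[x]_{\mathfrak{n}})$ dominates $(O,M)$ and contains $x$. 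By maximality of $(O,M)$ we conclude $x\in O$, as required.

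To establish the lemma, suppose instead that both ideals equal the whole ring, yielding relations $1=\sum_{i=0}^{n}a_i x^{i}$ and $1=\sum_{j=0}^{m}b_j x^{-j}$ with all $a_i,b_j\in M$, chosen so that $n$ and $m$ are minimal and $n\ge m\ge 1$. Because $b_0\in M$ and $O$ is local, $1-b_0$ is a unit of $O$, so the second relation can be solved to express $x^{m}$ as an $M$-coefficient combination of $1,x,\ldots,x^{m-1}$. Substituting this expression into the top term $a_n x^{n}$ of the first relation lowers its degree while keeping every coefficient in $M$, which contradicts the minimality of $n$. This contradiction proves the lemma and completes the argument.
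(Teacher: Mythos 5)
Your proof is correct, and its computational core coincides with the paper's: the two relations $1=\sum_{i=0}^{n}a_{i}x^{i}$ and $1=\sum_{j=0}^{m}b_{j}x^{-j}$ with coefficients in $M$ and $n,m$ minimal, followed by the degree-lowering substitution made possible because $1-b_{0}$ is a unit of the local ring $O$. Where you genuinely differ is in the Zorn poset and in how maximality is exploited. The paper's $\Sigma$ consists of pairs $(A,I)$ with $R_{P}\subseteq A\subseteq K$ and $I$ a proper ideal of $A$ containing $PR_{P}$, ordered by componentwise \emph{inclusion}; with that order the endgame is immediate (if $x,x^{-1}\notin O$, maximality of $(O,M)$ forces $MO[x]=O[x]$ and $MO[x^{-1}]=O[x^{-1}]$, which feeds the computation directly), but the locality of $O$ --- needed to invert $1-b_{0}$ --- must be extracted separately from maximality. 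You instead order \emph{local} subrings by \emph{domination} ($\mathfrak{m}_{B}\cap A=\mathfrak{m}_{A}$), which gives locality for free but costs two verifications the paper's order avoids: that the union of a chain of dominating local rings is again local, and --- since $O[x]$ need not be local --- the localization step $O[x]_{\mathfrak{n}}$ at a maximal ideal $\mathfrak{n}\supseteq M\cdot O[x]$, with $\mathfrak{n}\cap O=M$, to manufacture a member of $\Sigma$ lying above $(O,M)$. Your route is the classical domination argument (essentially Cohn's Lemma 4.3, quoted as Theorem \ref{Theorem E} in the paper), organized contrapositively around the clean lemma that $M$ survives in $O[x]$ or in $O[x^{-1}]$; note that this lemma is exactly the paper's Corollary \ref{Corollary C} with $R=O$, $I=M$, $u=x$, so you have factored through, as a preliminary lemma, what the paper extracts only afterwards as a corollary of the finished theorem. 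The paper's route, following Engler--Prestel, trades the more structured poset for a shorter endgame requiring no localization after Zorn. Both arguments are complete, and the steps you flagged as routine (chain locality, $\mathfrak{n}\cap O=M$, the domination of $(O,M)$ by $O[x]_{\mathfrak{n}}$) are indeed routine and are correctly indicated.
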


\begin{proof}
We use the standard notation $R_{P}$ for localization of $R$ at $P.$ Let $%
\sum =\{(A,I)|R_{P}\subseteq A\subseteq K$, $pR_{P}\subseteq I\subseteq A\}$
where $A$ is a ring and $I$ a proper ideal of $A$. Then $\sum \neq \phi ,$
because $(R_{P},pR_{P})\in \sum .$ Moreover $\sum $ may be partially ordered
as follows: for all $(A_{j},I_{j})$ $\in \sum $, ($j=1,2)$ we declare $%
(A_{1},I_{1})\leq (A_{2},$ $I_{2})\Leftrightarrow A_{1}\subseteq A_{2}$ and $%
I_{1}\subseteq I_{2}.$ For each chain $\{(A_{j},I_{j})|j\in J$ where $J$ is
an index set$\}$ we have an upper bound $(\cup A_{j},\cup I_{j})\in (\sum
,\leq ).$ By Zorn's Lemma, $\sum $ has a maximal element $(O,M).$ Observe
that $R\subseteq R_{P}\subseteq O,$ and since $PR_{P}$ is the maximal ideal
of $R_{P}$ we have $M\cap R_{P}=PR_{P}$ and consequently $M\cap R=P.$ So, to
complete the proof, it remains to show that $(O,M)$ is a valuation domain.
From the maximality of $(O,M)$ we first conclude that $O$ is a local ring.
Assume now that $O$ is not a valuation ring. Then there is $x\in K\backslash
\{0\}$ such that $x,x^{-1}\notin O.$ But then $O\subsetneq O[x],$ $O[x^{-1}]$%
. The maximality of $(O,M)$ implies therefore that $M[X]=O[x]$ and $%
M[x^{-1}]=O[x^{-1}].$ But then there exist $a_{0},...,a_{n};b_{0},...,b_{m}%
\in M$ such that

$1=\sum_{i=0}^{n}a_{i}x^{i}$ and $1=\sum_{i=0}^{m}b_{i}x^{-i}$ ....(i) with $%
n,m$ minimal.

Suppose for a start, that $m\leq n.$ As $b_{0}\in M,$ we have $%
\sum_{i=1}^{m}b_{i}x^{-i}=1-b_{0}\in O\backslash M$ (a nonzero non unit).
Or, dividing both sides of the previous equation by $1-b_{0}$ we get, $%
\sum_{i=1}^{m}\frac{b_{i}}{1-b_{0}}x^{-i}=1$. Thus we have $%
\sum_{i=1}^{m}c_{i}x^{-i}=1$ ..... (ii) where $c_{i}=\frac{b_{i}}{1-b_{0}}.$

Multiplying both sides of (ii) by $x^{n}$ we get $%
\sum_{i=1}^{m}c_{i}x^{n-i}=x^{n}$....(iii)

Now from (i) we have $1=\sum_{i=0}^{n}a_{i}x^{i}=%
\sum_{i=0}^{n-1}a_{i}x^{i}+a_{n}x^{n}$ ....(iv)

Substituting in (iv) the value of $x^{n}$ from (iii) we get

$1=\sum_{i=0}^{n-1}a_{i}x^{i}+a_{n}\sum_{i=1}^{m}c_{i}x^{n-i}$ ....(v)

Because $m\leq n,$ powers $p$ of $x$ in each summand on the right of (v) are 
$0\leq p\leq $ $n-1.$ But this contradicts the minimality of $n$ in
expressing $1$ as a polynomial in $x.$ If, on the other hand, we take $n\leq
m,$ then arguing in a similar fashion, we get a contradiction to the
minimality of $m.$
\end{proof}

Let $R\subseteq S$ be an extension of rings and let $I$ be a proper ideal of 
$R.$ Let us say that $I$ survives in $S$ if $I$ generates a proper ideal of $%
S,$ i.e., if $IS\neq S.$

\begin{corollary}
\label{Corollary B} (Kaplansky Theorem 56). Let $K$ be a field, $R$ a
subring of $K$, and $I$ an ideal in $R,I\neq R$. Then there exists a
valuation domain $V$, $R$ $\subseteq $ $V\subseteq K$, such that $K$ is the
quotientfield of $V$ and $I$ survives in $V$.
\end{corollary}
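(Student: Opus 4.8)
The plan is to reduce Corollary~\ref{Corollary B} to the prime-ideal case already settled by Theorem~\ref{Theorem A}. The first step is to enlarge the given proper ideal $I$ to a prime ideal. Since $R$ is a ring with identity and $I\neq R$, Zorn's Lemma guarantees that $I$ is contained in a maximal ideal $P$ of $R$, and every maximal ideal is prime. This is the only place where the hypothesis $I\neq R$ is genuinely used, and it is precisely what allows us to invoke Theorem~\ref{Theorem A}.

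Next I would apply Theorem~\ref{Theorem A} to the subring $R\subseteq K$ together with the prime $P$. This produces a valuation ring $V:=O$ of $K$ satisfying $R\subseteq V\subseteq K$ and $M\cap R=P$, where $M$ is the maximal ideal of $V$. Because $V$ is a valuation ring of $K$, every $x\in K\setminus\{0\}$ satisfies $x\in V$ or $x^{-1}\in V$; in either case $x$ is a ratio of two elements of $V$, so $K$ is the quotient field of $V$, as the statement demands.

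It then remains to verify that $I$ survives in $V$. The key observation is the chain $I\subseteq P=M\cap R\subseteq M$, which gives $I\subseteq M$. Since $M$ is an ideal of $V$, we obtain $IV\subseteq M$, and $M\subsetneq V$ because $M$ is a proper (indeed maximal) ideal of $V$. Hence $IV\neq V$, that is, $I$ survives in $V$, completing the argument.

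The proof is short once Theorem~\ref{Theorem A} is available: the single substantive move is the passage from the arbitrary proper ideal $I$ to a prime ideal $P\supseteq I$, after which survival is automatic from $I\subseteq M$. Rather than a real obstacle, the only point requiring care is the standing convention that $R$ contains the identity of $K$, since that is what makes ``proper ideal $\Rightarrow$ contained in a maximal ideal'' legitimate and thereby supplies the prime $P$.
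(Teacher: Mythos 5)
Your proof is correct and follows essentially the same route as the paper's: pass from $I$ to a prime ideal $P\supseteq I$, apply Theorem~\ref{Theorem A} to obtain the valuation ring $V$ with $M\cap R=P$, and conclude survival from $I\subseteq M$. You merely fill in details the paper leaves implicit (the maximal-ideal step via Zorn and the verification that $K$ is the quotient field of $V$), both of which are handled correctly.
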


\begin{proof}
Because $I\neq R$ there is a prime ideal $P$ of $R$ such that $I\subseteq P.$
Now by Theorem \ref{Theorem A} there is a valuation domain $(V,M)$ such that 
$P=M\cap R,$ i.e, $P$ survives in $V$ and consequently $I$ survives in $V.$
\end{proof}

The proof of Theorem \ref{Theorem A} can be slightly modified to produce
another interesting corollary.

\begin{corollary}
\label{Corollary C} Let $K$ be a field and $R$ a subring of $K$. Let $u\in
K\backslash \{0\}$, and let $I$ be an ideal in $R$, $I\neq R$. Then $I$
survives either in $R[u]$ or in $R[u^{-1}]$.
\end{corollary}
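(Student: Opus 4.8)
The plan is to argue by contradiction, reusing the polynomial manipulation from the proof of Theorem~\ref{Theorem A} but first passing to a suitable localization so that a certain element becomes invertible. Suppose, to the contrary, that $I$ survives in neither $R[u]$ nor $R[u^{-1}]$; then $IR[u]=R[u]$ and $IR[u^{-1}]=R[u^{-1}]$, so there are coefficients $a_{0},\dots ,a_{n}$ and $b_{0},\dots ,b_{m}$ in $I$ with
\[
1=\sum_{i=0}^{n}a_{i}u^{i}\qquad\text{and}\qquad 1=\sum_{i=0}^{m}b_{i}u^{-i}.
\]

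First I would choose a prime ideal $P$ of $R$ containing $I$, which exists since $I\neq R$. As $R$ is a domain (a subring of the field $K$), the localization $R_{P}$ embeds in $K$, so both displayed identities persist in $R_{P}[u]\subseteq K$, now with all coefficients lying in $IR_{P}\subseteq PR_{P}$. The gain from this step is exactly that $R_{P}$ is local with maximal ideal $PR_{P}$, whence $1-c$ is a unit of $R_{P}$ for every $c\in PR_{P}$. This invertibility is what drives the argument of Theorem~\ref{Theorem A}, whereas in the ring $R$ itself the corresponding element $1-b_{0}$ need not be a unit; supplying it is the one point at which Theorem~\ref{Theorem A} must be ``slightly modified,'' and I expect it to be the only genuine obstacle.

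With this in hand I would choose $n$ and $m$ minimal among all representations of $1$ of the above two shapes having coefficients in $PR_{P}$, noting that $n,m\geq 1$, since otherwise $1\in PR_{P}$. Assuming $m\le n$, I would set $c_{i}=b_{i}(1-b_{0})^{-1}\in PR_{P}$ to obtain $\sum_{i=1}^{m}c_{i}u^{-i}=1$, multiply by $u^{n}$, and substitute the resulting expression for $u^{n}$ into $1=\sum_{i=0}^{n-1}a_{i}u^{i}+a_{n}u^{n}$. Because $m\le n$, every power of $u$ that appears lies between $0$ and $n-1$, so this rewrites $1$ as a polynomial in $u$ of degree at most $n-1$ with coefficients in $PR_{P}$, contradicting the minimality of $n$; the case $n\le m$ is symmetric. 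This contradiction forces $I$ to survive in at least one of $R[u]$ and $R[u^{-1}]$.
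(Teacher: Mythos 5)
Your proof is correct, but it resolves the crux of the argument by a different device than the paper does. Both proofs share the same skeleton: assume $IR[u]=R[u]$ and $IR[u^{-1}]=R[u^{-1}]$, take representations $1=\sum_{i=0}^{n}a_{i}u^{i}$ and $1=\sum_{i=0}^{m}b_{i}u^{-i}$ with $n,m$ minimal, say $m\leq n$, and use the second equation to rewrite (a multiple of) $u^{n}$ so as to produce a representation of $1$ of degree less than $n$, contradicting minimality. The obstacle in adapting Theorem \ref{Theorem A} is that $1-b_{0}$ is no longer known to be a unit, since $R$ is not local and $b_{0}$ merely lies in $I$. You restore invertibility by localizing: pick a prime $P\supseteq I$, embed $R_{P}$ in $K$ (using that $R$ is a domain), observe that the coefficients land in $PR_{P}$, and then divide by the unit $1-b_{0}$ exactly as in Theorem \ref{Theorem A}; your re-choice of $n,m$ minimal over representations with coefficients in $PR_{P}$ makes the final contradiction legitimate, so the argument is sound. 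The paper instead avoids division altogether: it multiplies $1=\sum_{i=0}^{n}a_{i}u^{i}$ through by $(1-b_{0})$ and substitutes $(1-b_{0})u^{n}=\sum_{i=1}^{m}b_{i}u^{n-i}$, obtaining $1=b_{0}+\sum_{i=0}^{n-1}a_{i}(1-b_{0})u^{i}+a_{n}\sum_{i=1}^{m}b_{i}u^{n-i}$, a representation of degree less than $n$ whose coefficients still lie in $I$ itself. What the paper's purely formal manipulation buys is generality: it nowhere uses that $K$ is a field or that $R$ is a domain, which is precisely why the same wording proves Corollary \ref{Corollary D} (Kaplansky's Theorem 55) for an arbitrary ring extension $R\subseteq T$ with $u$ a unit of $T$. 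Your localization step, as written, does use the domain hypothesis, so it would need reworking (localizing $T$ as well and checking the relevant maps are injective) before it yields Corollary \ref{Corollary D}. What your route buys in exchange is economy: after localizing, the argument of Theorem \ref{Theorem A} applies essentially verbatim rather than needing modification.
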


\begin{proof}
Suppose that $I$ survives in neither. Then $IR[u]=R[u]$ and $%
IR[u^{-1}]=R[u^{-1}].$ Then there exist $a_{0},...,a_{n};b_{0},...,b_{m}\in
I $ such that

$1=\sum_{i=0}^{n}a_{i}x^{i}$ and $1=\sum_{i=0}^{m}b_{i}x^{-i}$ ....(I) with $%
n,m$ minimal.

From the second expression in (I) we have $\sum_{i=1}^{m}b_{i}x^{-i}=1-b_{0}$
... (II)

Assuning that $m\leq n$ and multiplying (II) by $x^{n},$ throughout, we get $%
\sum_{i=1}^{m}b_{i}x^{n-i}=(1-b_{0})x^{n}$ ...(III)

Next, multiplying the first equation in (I) by $(1-b_{0})$ we have

$(1-b_{0})=\sum_{i=0}^{n-1}a_{i}(1-b_{0})x^{i}+a_{n}(1-b_{0})x^{n}$ ... (IV)

Now, substituting the value of $(1-b_{0})x^{n}$ in (IV) and re-writing we get

$1=b_{0}+\sum_{i=0}^{n-1}a_{i}(1-b_{0})x^{i}+a_{n}\sum_{i=1}^{m}b_{i}x^{n-i}$
...(V)

Since every power of $x$ that appears in (V) is less than $n$ we conclude
that $1$ can be expressed as a polynomial of degree less than $n$ and that
contradicts the minimality of $n.$ Finally assuming that $n\leq m$ and
reversing the roles of $m$ and $n$ in the above calculations we get a
similar contradiction.
\end{proof}

Now let us change the wording of Corollary \ref{Corollary C} to see that
with the same wording as in the proof of Corollary \ref{Corollary C} we can
prove.

\begin{corollary}
\label{Corollary D} Let $R\subseteq T$ be rings, let $u$ be a unit in $T$,
and let $I$ be an ideal in $R$, $I\neq R$. Then $I$ survives either in $R[u]$
or in $R[u^{-1}]$.
\end{corollary}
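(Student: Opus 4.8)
The plan is to observe that the argument given for Corollary \ref{Corollary C} never used anything about the field $K$ beyond the single fact that the chosen element is invertible, so the identical computation proves the present statement once we replace ``$u\in K\setminus\{0\}$'' by ``$u$ a unit in $T$''. Concretely, since $u$ is a unit in $T$ the element $u^{-1}$ lies in $T$, hence both $R[u]$ and $R[u^{-1}]$ are genuine subrings of $T$ and the relation $uu^{-1}=1$ holds. These are exactly the ingredients on which the proof of Corollary \ref{Corollary C} rests, so I expect a verbatim transfer.

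First I would argue by contradiction: if $I$ survives in neither $R[u]$ nor $R[u^{-1}]$, then $IR[u]=R[u]$ and $IR[u^{-1}]=R[u^{-1}]$, which yields expressions $1=\sum_{i=0}^{n}a_{i}u^{i}$ and $1=\sum_{i=0}^{m}b_{i}u^{-i}$ with all $a_{i},b_{i}\in I$ and $n,m$ chosen minimal. Next, assuming without loss of generality that $m\le n$, I would run the same manipulation as before: rewrite the second identity as $\sum_{i=1}^{m}b_{i}u^{-i}=1-b_{0}$, multiply through by $u^{n}$, and substitute the resulting value of $(1-b_{0})u^{n}$ into $(1-b_{0})$ times the first identity. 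This produces $1=b_{0}+\sum_{i=0}^{n-1}a_{i}(1-b_{0})u^{i}+a_{n}\sum_{i=1}^{m}b_{i}u^{n-i}$, in which every power of $u$ is at most $n-1$ (because $m\le n$) and every coefficient still lies in $I$ (as $b_{0}\in I$, $a_{i}(1-b_{0})\in I$, and $a_{n}b_{i}\in I$). This contradicts the minimality of $n$; the case $n\le m$ is symmetric.

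The point worth stressing --- and the only thing one must actually verify --- is that this manipulation is entirely \emph{division-free}. Whereas the proof of Theorem \ref{Theorem A} divided by $1-b_{0}$ and so needed the localness of $O$ to know $1-b_{0}$ is a unit, here we clear denominators in advance by multiplying by $1-b_{0}$, so no inverse other than $u^{-1}$ is ever invoked. Consequently the computation takes place verbatim inside the arbitrary commutative ring $T$, using only $uu^{-1}=1$, and no field (or domain) hypothesis is needed. Thus I do not expect a genuine obstacle beyond confirming that each step stays within $T$ and preserves membership of the coefficients in $I$; granting that, the proof of Corollary \ref{Corollary C} applies word for word.
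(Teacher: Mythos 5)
Your proposal is correct and coincides with the paper's own treatment: the paper proves Corollary \ref{Corollary D} precisely by remarking that the proof of Corollary \ref{Corollary C} carries over with the same wording, which is exactly your verbatim transfer. Your added observation that the argument is division-free (multiplying by $1-b_{0}$ rather than dividing, so only $uu^{-1}=1$ is ever used) is the right justification for why the transfer to an arbitrary ring $T$ is legitimate.
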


Observe that Corollary \ref{Corollary D} is precisely Theorem 55 of \cite{K}.

Finally, thanks to Kaplansky's students and disciples Chevalley's Extension
Theorem gets cited a lot, in the form of Theorem 56 of \cite{K}, in
Multiplicative Ideal Theory, and the paper \cite{DHLZ} is no exception. Now
if there is a comment about the veracity of Theorem 56 of \cite{K}, from a
big gun like Dan Anderson, it would seriously undermine the confidence in
all the papers using that theorem, with \cite{DHLZ} included and that is
another reason for jotting down the above few lines. I hope I have been able
to establish the veracity of the statement of Theorem 56 of \cite{K}. Of
course if the ordering is reversed in the proof of Theorem 56 of \cite{K},
to fit Dan's requirement, then the proof will become all right, but then it
would clearly appear to have been taken from Chevalley's Extension Theorem!

In addition to \cite{EP}, one can find Chevalley's Theorem, without a
reference to Chevalley, as Lemma 4.3 in \cite{C A2} with statement:

\begin{theorem}
\label{Theorem E} Let $K$ be a field, $R$ a subring of $K$ and $\mathfrak{a}$
a nonzero proper ideal in $R.$ Then there is a proper subring $V$ with an
ideal $\mathfrak{p}$ such that $(V,$ $\mathfrak{p)}$ is maximal among pairs
dominating $(R,$ $\mathfrak{a}).$ Further, any such maximal pair $(V,%
\mathfrak{p)}$ consists of a valuation ring $V\neq K$ and its maximal ideal $%
\mathfrak{p}$.
\end{theorem}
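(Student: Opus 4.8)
The plan is to recognize that Theorem \ref{Theorem E} is Theorem \ref{Theorem A} repackaged in the language of dominating pairs, and to assemble it from the machinery already built, leaning on Corollary \ref{Corollary C} for the valuation property. First I would make the domination order explicit: a pair $(A,I)$, with $A$ a subring of $K$ and $I$ a proper ideal of $A$, dominates $(R,\mathfrak{a})$ when $R\subseteq A\subseteq K$ and $\mathfrak{a}\subseteq I$, and such pairs are ordered by $(A_{1},I_{1})\leq (A_{2},I_{2})$ iff $A_{1}\subseteq A_{2}$ and $I_{1}\subseteq I_{2}$. The collection $\Sigma$ of pairs dominating $(R,\mathfrak{a})$ is nonempty, since $(R,\mathfrak{a})$ itself lies in it.

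For the existence half I would run exactly the Zorn's Lemma argument from Theorem \ref{Theorem A}: given a chain, the componentwise union $(\bigcup A_{j},\bigcup I_{j})$ is again a dominating pair, the only point needing comment being that $\bigcup I_{j}$ stays proper, which holds because $1\notin I_{j}$ for every $j$ and hence $1\notin \bigcup I_{j}$. Zorn's Lemma then yields a maximal pair $(V,\mathfrak{p})$.

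The substance is the characterization of a maximal pair. I would first observe that $\mathfrak{p}$ must be a maximal ideal of $V$: were there a proper ideal $I'\supsetneq \mathfrak{p}$ of $V$, the pair $(V,I')$ would strictly dominate $(V,\mathfrak{p})$, contradicting maximality. For the valuation property I would feed Corollary \ref{Corollary C} the ring $V$ and its proper ideal $\mathfrak{p}$: for any $x\in K\setminus \{0\}$, $\mathfrak{p}$ survives in $V[x]$ or in $V[x^{-1}]$. If it survives in $V[x]$, pick a maximal ideal $J$ of $V[x]$ containing $\mathfrak{p}V[x]$; then $(V[x],J)$ dominates $(V,\mathfrak{p})$, so maximality forces $V[x]=V$, i.e. $x\in V$. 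Symmetrically, surviving in $V[x^{-1}]$ gives $x^{-1}\in V$. Hence every nonzero $x\in K$ satisfies $x\in V$ or $x^{-1}\in V$, so $V$ is a valuation ring of $K$, necessarily local, and its unique maximal ideal is the maximal ideal $\mathfrak{p}$ found above.

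It remains to see $V\neq K$, and this is the one place the nonzero hypothesis on $\mathfrak{a}$ enters: since $\mathfrak{a}\neq 0$ and $\mathfrak{a}\subseteq \mathfrak{p}$, the ideal $\mathfrak{p}$ is nonzero and proper, which a field cannot possess, so $V\neq K$. I do not anticipate a genuine obstacle, as the whole statement is the dominating-pairs reformulation of Theorem \ref{Theorem A}; the only care needed is to invoke maximality in the right direction (enlarging the \emph{ring} is what is obstructed) and to note that Corollary \ref{Corollary C} silently performs the polynomial manipulations displayed in the proof of Theorem \ref{Theorem A}.
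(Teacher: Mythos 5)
Your proof is correct and follows essentially the route the paper intends: the paper gives no separate proof of Theorem \ref{Theorem E}, observing only that it is Chevalley's theorem (Theorem \ref{Theorem A}) restated in the language of dominating pairs and proved by the same Zorn's Lemma argument on pairs ordered componentwise by inclusion. Your one departure is purely organizational --- you delegate the minimal-degree polynomial contradiction to Corollary \ref{Corollary C} instead of repeating it inline, which is exactly the content of that corollary, so the mathematics coincides.
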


Here a pair $(R,$ $\mathfrak{a})$, where $R$ is a subring of $K$ and $%
\mathfrak{a}$ an ideal of $R$ is said to dominate another pair $(R^{\prime
}, $ $\mathfrak{a}^{\prime }),$ if $R\supseteq R^{\prime }$ and $\mathfrak{%
a\supseteq a}^{\prime }.$ In \cite{C BA} Theorem \ref{Theorem E} appears as
Lemma 9.4.3, with exactly the statement as above and with a proof similar to
that of Theorem \ref{Theorem A}. Yet in this book Paul does mention
Chevalley, including \cite{Chev} as a reference at the end of the book.
Theorem 1 in \cite{Chev} comes close to what is called Chevalley's Extension
Theorem in \cite{EP}. Though couched in a totally different language, the
proof has a similar flavor. If there is any doubt, Remark 1 after the proof
removes it by saying that Neither the definition of a $V$-ring in $R$, nor
the proof of Theorem 1, makes any use of the fact that $R$ is a field of
algebraic functions of one variable. It follows that our proof of Theorem 1
yield a result which is valid for any pair of fields $(K,R)$ such that $K$
is a subfield of $R.$ (For our purposes, a $V$-ring is a valuation domain $%
\mathcal{V}$ and a place is its maximal ideal.)

Finally, there has been a spate of some clever Mathematicians taking the
easy way out by presenting "new results" by a mere change of terminology and
adopting the results they fancy, from someone who they think may not have a
voice. Here are two of the examples that I am painfully aware of:
https://lohar.com/mithelpdesk/hd2004.pdf and
https://lohar.com/mithelpdesk/hd2006.pdf

In my opinion this cannot happen without the help of a supportive referee
and as this plague seems to be rampant in the so called multiplicative ideal
theory and among the students of Dan Anderson's, I am beginning to see
Kaplansky's Theorems 55 and 56 of \cite{K}, in a similar light, a heist from
an unsuspecting fellow's work. 

\bigskip

\bigskip

\end{document}